\theoremstyle{plain} 
\newtheorem{assumption}{Assumption}
\theoremstyle{plain} 
\newtheorem{lem}{Lemma}
\theoremstyle{theorem} 
\newtheorem{thm}{Theorem}
\theoremstyle{remark}
\newtheorem{rem}{Remark}
\DeclareMathOperator*{\argmin}{argmin}
\DeclareMathOperator*{\trans}{T}
\newcommand{\ahmpc}{AH-MPC}
\title{\LARGE \bf Constrained LQR Using Online Decomposition Techniques{$^*$}}
\author{L. Ferranti$^{1}$, G. Stathopoulos$^{2}$, C. N. Jones$^2$, and T. Keviczky$^1$
\thanks{*This research is supported by the European Union's Seventh Framework
Programme (FP7/2007-2013) under grant agreement n. AAT-2012-RTD-2314544
(RECONFIGURE), by the TU Delft Space Institute, by the European Union's Seventh
Framework Programme (FP/2007-2013)/ ERC grant agreement n. 307608 (BuildNet), and by the People Programme (Marie Curie Actions) of the European Union's Seventh Framework Programme (FP7/2007-2013) under REA grant agreement n. 607957 (TEMPO).}
 \thanks{{$^{1}$}L. Ferranti and T.
Keviczky are with the Delft Center for Systems and Control, Delft University of Technology, Delft, 2628 CD, The Netherlands,
        {\tt\small $\{$l.ferranti,t.keviczky$\}$@tudelft.nl}}%
\thanks{{$^{2}$}G. Stathopoulos and C. N. Jones are with the Laboratoire d'Automatique, \'{E}cole Polytecnique F\'{e}d\'{e}rale de Lausanne (EPFL), Lausanne, CH-1015, Switzerland,
        {\tt\small $\{$georgios.stathopoulos,colin.jones$\}$@epfl.ch}}}%
\newif\ifpaper 
\begin{document}

\maketitle
\thispagestyle{empty}
\pagestyle{empty}

\begin{abstract}
This paper presents an algorithm to solve the infinite horizon constrained linear quadratic regulator (CLQR) problem using operator splitting methods. 
First, the CLQR problem is reformulated as a (finite-time) model predictive control (MPC) problem without terminal constraints. Second, the MPC problem is decomposed into 
smaller subproblems of fixed dimension independent of the horizon length. Third, using the fast alternating minimization algorithm to solve the subproblems, the horizon length 
is estimated online, by adding or removing subproblems based on a periodic check on the state of the last subproblem to determine whether it belongs to a given control invariant set. 
We show that the estimated horizon length is bounded and that the control sequence computed using the proposed algorithm is an optimal solution of the CLQR problem. 
Compared to state-of-the-art algorithms proposed to solve the CLQR problem, our design solves at each iteration only unconstrained least-squares problems and simple gradient
 calculations. Furthermore, our technique allows the horizon length to decrease online~(a useful feature if the initial guess on the horizon is too conservative). 
 Numerical results on a planar system show the potential of our algorithm.
\end{abstract}
\section{INTRODUCTION}
\label{sec:sec_introduction}
The linear quadratic regulator (LQR) proposed by~\cite{Kalman1960} allows one, under mild assumptions on the system dynamics, to design an optimal state feedback to stabilize the
 plant in closed loop in the absence of constraints. When constrains are present the controller should be able to exploit as much as possible the actuator operating ranges to maximize the production. This observation motivated the study of model predictive control (MPC) in the late 70s~\cite{Richalet1978,Cutler1980}. 

An MPC controller solves a constrained optimization problem (derived from the LQR formulation) over a finite time window (prediction horizon). The main advantage of MPC is its
 ability to handle constraints. The use of a finite time window, however, compromises the nice properties of the LQR controller in terms of optimality and closed-loop stability
  (more details can be found in~\cite{Maciejowski2002,Borrelli2015}). Closed-loop stability of the MPC controller can be preserved by including in the MPC problem 
  formulation a terminal set (together with a terminal cost), as discussed
  in~\cite{Mayne2000}. The main advantage is that this formulation is equivalent
  to solving the constrained infinite horizon LQR problem gaining tractability
  from a computational point of view. In particular, the terminal set is used to
  constrain the last predicted state to remain within a control invariant set.
   This set is usually selected to be the maximal positively
   invariant set of the closed loop obtained using the (unconstrained) LQR
   control law associated with the MPC controller. On one hand, the use of the terminal set
   (together with a terminal cost) allows one to prove closed-loop stability of
   the system controlled with MPC. On the other hand, the use of the terminal set reduces the region of attraction of the MPC
   controller leading, in general, to more conservative performance. For this reason, many practical MPC applications rely on the use of a \emph{sufficiently long} prediction horizon tuned offline to ensure that the system will converge to the 
   terminal set. Although this approach often works in practice, there are no guarantees that for all the possible initial conditions the closed-loop system is stable.

\textit{\textbf{Contribution.}} We aim to solve the CLQR problem in a
computationally tractable way, without relying on the use of the terminal set.
In this respect, we propose the use of decomposition techniques to exploit the MPC problem structure and estimate the length of the prediction horizon online.
 The constrained LQR problem is reformulated as an MPC problem. The length of the horizon in the MPC problem, however, is not fixed, but it is decided online 
 by our algorithm to guarantee closed-loop stability. By relying on the fast alternating minimization algorithm (FAMA)~\cite{Goldstein2014}, our algorithm solves at each iteration
  unconstrained least-squares problems and simple gradient calculations of size independent of the length of the prediction horizon. We show, under mild assumptions on the cost and 
  on the system dynamics, that the length of the horizon is bounded and the solution using our algorithm is an optimal solution of the CLQR problem. Finally, numerical results using 
  the planar system of~\cite{StathopoulosTAC2015} are provided to show the
  potential of our proposed approach.

\textit{\textbf{Related Work.}} The proposed approach relies on the work of~\cite{Scokaert1998} for the CLQR. We combined their technique with the decomposition along the length of the
 prediction horizon (\emph{time-splitting} approach) proposed in~\cite{StathopoulosECC13}, aiming to reduce the computational complexity of the algorithm, as detailed in 
 Section~\ref{sec:sec_clqr_time_splitting}.
In~\cite{StathopoulosTAC2015}, an algorithm to compute online the length of the horizon using operator-splitting techniques is also proposed. Compared to their approach, we use a 
different splitting technique that allows the length of the horizon to decrease online aiming to reduce the conservatism in the initial guess on the horizon length.  

\textit{\textbf{Outline.}} Section~\ref{sec:problem_formulation} introduces our
problem formulation. Section~\ref{sec:sec_preliminaries} summarizes existing
results from~\cite{Goldstein2014} and~\cite{Scokaert1998}. Section~\ref{sec:sec_clqr_time_splitting} details our proposed approach.
Section~\ref{sec:sec_simulation_results.tex} presents numerical results. Section~\ref{sec:sec_conclusions} concludes the paper.
 
\textit{\textbf{Notation.}} For $u\in \mathbb R^n$, $\left \| u \right
\|=\sqrt{\langle u,u \rangle}$ is the Euclidean norm. Let $\mathbb C$ be a convex set. Then, $\mathbf{Pr}_{\mathbb C}(u)$ is the projection of $u$ onto $\mathbb C$. 
Furthermore, $\mathcal I_{\mathbb C}(\sigma)$ is the indicator function on the convex set $\mathbb C$, which is zero if $\sigma \in \mathbb C$ and infinity otherwise.
Let $A\in\mathbb R^{n\times m}$. Then, $\operatorname{eig}_{\max}(A)$ and $\operatorname{eig}_{\min}(A)$ are the largest and the smallest (modulus) eigenvalues of $A^{\trans}A$. $P  \in\mathbb S_{++}^{n\times n}$ denotes that $P=P^{\operatorname{T}}\in \mathbb R^{n\times n}$ is positive definite. Finally, details on the notions of strong convexity and Lipschitz continuity can be found in~\cite{Boyd2004}. 
 
\section{PROBLEM FORMULATION}
\label{sec:problem_formulation}
This section presents the constrained LQR (CLQR) problem that we aim to solve using the decomposition techniques proposed in Section~\ref{sec:sec_clqr_time_splitting}.

Consider the discrete linear time-invariant (LTI) system described as follows:
\begin{equation}
\label{eq:LTIsystem}
x(t+1) = Ax(t)+Bu(t),
\end{equation}
where the state $x(t)\in \mathbb R^n$ and control input $u(t)\in\mathbb R^m$ are subject to the following constraints:
\begin{equation}
\label{eq:constraints}
Cx(t)+Du(t)\leq d,
\end{equation} 
and $A\in \mathbb R^{n\times n}$, $B\in  \mathbb R^{n\times m}$, $C\in\mathbb R^{p\times n}$, and $D\in \mathbb R^{p\times m}$ are known constant matrices.
\begin{assumption}
\label{ass:stabilizability}
The pair $(A,B)$ is stabilizable.
\end{assumption}
Our goal is to regulate to the origin the state of the system, starting from a feasible initial condition. In the absence of constraints, a natural choice to achieve this goal would be to design an optimal state feedback using the well known unconstrained linear quadratic regulator (ULQR) theory~\cite{Kalman1960}. The design of the optimal state feedback, however, is complicated by the presence of the constraints~\eqref{eq:constraints}. In particular, the computation of an optimal control law to regulate the system to the origin leads to the following infinite-dimensional optimization problem:
\begin{subequations}
\label{eq:clqr}
\begin{align}
\label{eq:infinite_cost_function}
\underset{x,u}{\operatorname{minimize}}~& \frac{1}{2}\sum\limits_{t=0}^{\infty}x_t^{\textrm{T}}Qx_t+u_t^{\textrm{T}}Ru_t\\\label{eq:clqr_equality_constraints}
\operatorname{subject~to:~}& x_{t+1} = A x_t+B u_t,\quad t\in\mathbb N\\
& x_{0} = x_{\textrm{init}},\\
& d~\geq Cx_t+D u_t,\quad t\in \mathbb N,
\end{align}
\end{subequations}
where $x_t$ and $u_t$ are the $t$-step-ahead state and control predictions, respectively.    
We refer to Problem~\ref{eq:clqr} as the CLQR problem.
\begin{assumption}
\label{ass:positive_def}
$Q\in\mathbb S^{n\times n}_{++}$ and $R\in\mathbb S^{m\times m}_{++}$. 
\end{assumption}
The CLQR control law obtained by solving Problem~\eqref{eq:clqr} is $u^{\infty}
= u_0$, where $u_0(x_{\textrm{init}})$ is the first element of the infinite
sequence of predicted control commands $\mathbf u^{\infty}:=\{u_0,u_1,\ldots\}$.

Suppose that Assumptions~\ref{ass:stabilizability} and~\ref{ass:positive_def} hold. Then, there exists an optimal stabilizing state-feedback gain $K\in \mathbb R^{m\times n}$ and 
$P\in\mathbb S^{n\times n}_{++}$ (solution of the algebraic Riccati equation) associated with the ULQR. Furthermore, starting from a given initial condition $x_{\textrm{init}}$, there exists a time instance $N^{\infty}$ in 
which the state will enter a polyhedral set $\mathcal X_f := \left\{x\in\mathbb R^n\,|\,C_fx\leq d_f\right\}$ such that $\forall$ $x(t)\in \mathcal X_f~\Rightarrow$~$x(t+1) = (A+BK)x(t) \in \mathcal X_f$ $\forall t\geq N^{\infty}$. The computation of $N^{\infty}$ is, in general, very challenging and in classical MPC it is common practice to reformulate Problem~\eqref{eq:clqr} as follows:
\begin{subequations}
\label{eq:FH-MPC}
\begin{align}
\label{eq:finite_cost_function}
\underset{x,u}{\operatorname{minimize}}~& \frac{1}{2}\sum\limits_{t=0}^{N-1}x_t^{\textrm{T}}Qx_t+u_t^{\textrm{T}}Ru_t+V_f\\
\operatorname{subject~to:~}& x_{t+1} = A x_t+B u_t,\quad t=0,\ldots,N\\
& x_{0} = x_{\textrm{init}},\\
& d~\geq Cx_t+D u_t,\quad t=0,\ldots, N-1,\\
& x_{N}\in X_f,
\end{align}
\end{subequations}
where the horizon $N$ is fixed based on some heuristic, $X_f$ is used as terminal set to constrain the $N$-step-ahead predicted state and {$V_f:=x_{N}^\textrm{T}Px_{N}$} is used as terminal cost to replace {$\sum_{N^{}}^\infty (\cdot)$} in the cost~\eqref{eq:infinite_cost_function}. We refer to Problem~\eqref{eq:FH-MPC} as the finite-horizon MPC (FH-MPC) Problem. 

The use of the terminal set allows one to prove closed-loop stability of the
proposed MPC controller, but it also leads to conservatism in the performance.
To avoid this conservatism, in practical applications, it is common practice to
remove the terminal set and define a sufficiently long horizon $N^{}$ offline to
guarantee that the predicted state at the end of the horizon is inside a positively invariant set, which is hard to compute.

Our work aims to solve Problem~\eqref{eq:clqr} by using an MPC approximation of the CLQR problem without explicitly relying on the terminal set. The proposed MPC controller relies on an online estimation strategy of the horizon length $N\geq N^{\infty}$ to ensure that the predicted terminal state enters the terminal set. In particular, we aim to solve the following~problem:
\begin{subequations}
\label{eq:IH-MPC}
\begin{align}
\label{eq:IH-MPC_cost}
\underset{x,u}{\operatorname{minimize}}~& \frac{1}{2}\sum\limits_{t=0}^{N-1}x_t^{\textrm{T}}Qx_t+u_t^{\textrm{T}}Ru_t+V_f\\
\operatorname{subject~to:~}& x_{t+1} = A x_t+B u_t,\quad t=0,\ldots,N\\
& x_{0} = x_{\textrm{init}},\\
\label{eq:IH-MPC_inequality_constraints}
& d~\geq Cx_t+D u_t,\quad t=0,\ldots, N.
\end{align}
\end{subequations}
In the remainder of the paper, we refer to Problem~\eqref{eq:IH-MPC} as the adaptive-horizon MPC (\ahmpc), which differs from the FH-MPC Problem~\eqref{eq:FH-MPC} in the 
definition of constraints (no terminal constraints) and length of $N\geq N^{\infty}$ adapted online.
 
Our approach is based on a similar idea to the one proposed in~\cite{Scokaert1998} combined with the use of splitting methods~\cite{Goldstein2014}. The next section summarizes these approaches.  
\section{PRELIMINARIES}
\label{sec:sec_preliminaries}
In the following, we review interesting and closely related existing strategies to solve the CLQR Problem~\eqref{eq:clqr}.
\subsection{Constrained Linear Quadratic Regulator~\cite{Scokaert1998}}
\label{subsec:scokaert}
The design described in Section~\ref{sec:sec_clqr_time_splitting} strongly relies on the results proposed by~\cite{Scokaert1998}. Hence, in the following, we summarize their algorithm and their main findings useful for the scope of the paper. 

Under Assumptions~\ref{ass:stabilizability} and~\ref{ass:positive_def} the following holds:
\begin{thm}[Theorem 3 in~\cite{Scokaert1998}]
\label{thm:scokaert}
Let $V^{\infty}$ be the optimal cost obtained by solving the CLQR Problem~\eqref{eq:clqr} and let $\mathbf u^{\infty}$ be the associated optimal control sequence, for $x_0=x_{\textrm{init}}$. 
Furthermore, let $V^{\textrm{\ahmpc}}$ be the optimal cost obtained by solving the \ahmpc~Problem~\eqref{eq:IH-MPC} using~Algorithm~\ref{alg:scokaert} and let
 $\mathbf u^{\textrm{\ahmpc}}$ be the associated optimal control sequence. Then,
 for every $x_0\in\mathcal X$, where $\mathcal X$ indicates the set of feasible
 states for which $V^{\infty}$ is finite, there exists a \textbf{finite}
 positive integer $N^{\infty}(x_0)$ such that $V^{\infty} = V^{\textrm{\ahmpc}}$ and $\mathbf u^{\infty} = \mathbf u^{\textrm{\ahmpc}}$ for all $N\geq N^{\infty}$.
\end{thm}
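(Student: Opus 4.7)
The plan is to reduce the theorem to the observation that the infinite-horizon CLQR tail eventually coincides with the unconstrained LQR closed-loop response, so that once $N$ is large enough the terminal cost $V_f(x_N)=x_N^{\textrm{T}}Px_N$ exactly represents the truncated remainder of the infinite sum in~\eqref{eq:infinite_cost_function}.

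First, I would argue that the CLQR optimal state sequence $\mathbf x^{\infty}$ converges to the origin. Under Assumption~\ref{ass:positive_def}, positive definiteness of $Q$ and $R$ combined with the finiteness of $V^{\infty}$ (guaranteed by $x_0\in\mathcal X$) yields $\sum_{t=0}^{\infty}\|x_t^{\infty}\|^{2}<\infty$ and hence $x_t^{\infty}\to 0$. Using that the polytope in~\eqref{eq:constraints} contains the origin in its interior, and that $A+BK$ is Schur stable under Assumption~\ref{ass:stabilizability}, I would then conclude that there is a finite index $N^{\infty}(x_0)$ such that $x_t^{\infty}\in\mathcal X_f$ for every $t\ge N^{\infty}$.

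Next, I would show that once inside $\mathcal X_f$ the CLQR tail coincides with the unconstrained LQR response. On $\mathcal X_f$ the feedback $u=Kx$ satisfies~\eqref{eq:constraints} and keeps the trajectory inside $\mathcal X_f$ forever, so a CLQR continuation from $x_{N^{\infty}}^{\infty}$ is also feasible for the \emph{unconstrained} LQR; by uniqueness of the strictly convex unconstrained LQR minimizer, it must satisfy $u_t^{\infty}=Kx_t^{\infty}$ for all $t\ge N^{\infty}$. This pins the tail cost at $V_f(x_N^{\infty})=(x_N^{\infty})^{\textrm{T}}Px_N^{\infty}$ for every $N\ge N^{\infty}$, so the truncation $\{x_t^{\infty},u_t^{\infty}\}_{t=0}^{N}$ is \ahmpc-feasible with \ahmpc~cost equal to $V^{\infty}$, giving the easy direction $V^{\textrm{\ahmpc}}\le V^{\infty}$.

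For the reverse inequality, I would exploit the pointwise bound $V_f\le V^{\infty}$ (the unconstrained LQR cost cannot exceed the constrained one) combined with the Bellman decomposition $V^{\infty}(x_0)=\min\{\sum_{t=0}^{N-1}\text{stage cost}+V^{\infty}(x_N)\}$ over CLQR-feasible prefixes, and then invoke strict convexity of the \ahmpc~cost from Assumption~\ref{ass:positive_def} (which makes the \ahmpc~minimizer unique) to identify the CLQR truncation with the \ahmpc~optimum. The hard part is exactly this reverse inequality: since \ahmpc~carries no terminal constraint forcing $x_N^{\textrm{\ahmpc}}\in\mathcal X_f$, one cannot simply LQR-extend an abstract \ahmpc~minimizer and hope to produce a CLQR-feasible trajectory of matching cost. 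The uniqueness argument side-steps this by using the already-established upper bound to conclude that the CLQR truncation itself must be the \ahmpc~minimizer, which in turn forces $x_N^{\textrm{\ahmpc}}=x_N^{\infty}\in\mathcal X_f$ and $\mathbf u^{\textrm{\ahmpc}}=\mathbf u^{\infty}$ on $[0,N-1]$.
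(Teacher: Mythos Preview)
The paper does not prove this theorem; it is quoted verbatim from Scokaert and Rawlings and used as a black box. So there is no ``paper's proof'' to compare against, and your attempt has to stand on its own.

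The first two thirds of your argument are fine: convergence $x_t^{\infty}\to 0$, existence of a finite $N^{\infty}$ with $x_t^{\infty}\in\mathcal X_f$ for $t\ge N^{\infty}$, the fact that the CLQR tail coincides with the LQR feedback on $\mathcal X_f$, and the resulting inequality $V^{\textrm{\ahmpc}}\le V^{\infty}$ are all correct and standard.

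The gap is precisely where you flag it, and your proposed ``side-step'' does not close it. You write that uniqueness of the \ahmpc~minimizer together with the upper bound $V^{\textrm{\ahmpc}}\le V^{\infty}$ forces the CLQR truncation to \emph{be} the \ahmpc~minimizer. That inference is circular: uniqueness tells you the \ahmpc~problem has a single minimizer, but it says nothing about \emph{which} feasible point that is. The CLQR truncation is merely one feasible point achieving cost $V^{\infty}$; nothing so far rules out another \ahmpc-feasible sequence with terminal state $x_N\notin\mathcal X_f$, where $V_f(x_N)<V^{\infty}(x_N)$ strictly, producing total cost strictly below $V^{\infty}$. In that situation $V^{\textrm{\ahmpc}}<V^{\infty}$ and the CLQR truncation is \emph{not} the minimizer, so the conclusion you draw from uniqueness fails. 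The Bellman identity and the bound $V_f\le V^{\infty}$ that you invoke both point the wrong way for a lower bound on $V^{\textrm{\ahmpc}}$.

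What is actually needed (and what Scokaert--Rawlings supply) is an argument that for all sufficiently large $N$ the \ahmpc~optimizer itself satisfies $x_N^{\textrm{\ahmpc}}\in\mathcal X_f$; only then can one LQR-extend it to a CLQR-feasible trajectory of cost $V^{\textrm{\ahmpc}}$ and obtain $V^{\infty}\le V^{\textrm{\ahmpc}}$. One route uses that $N\mapsto V^{\textrm{\ahmpc}}(N)$ is monotone nondecreasing (a consequence of the Riccati/Bellman inequality $V_f(x)\le \ell(x,u)+V_f(Ax+Bu)$) together with a uniform bound that drives $x_N^{\textrm{\ahmpc}}$ into any neighbourhood of the origin, hence into $\mathcal X_f$, for $N$ large. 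Your outline does not contain this step.
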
 

By relying on the results of the theorem above, the authors in~\cite{Scokaert1998} propose Algorithm~\ref{alg:scokaert} to solve Problem~\eqref{eq:IH-MPC} (and consequently Problem~\eqref{eq:clqr}). Note that step~1 of Algorithm~\ref{alg:scokaert} requires the computation of a solution for Problem~\eqref{eq:IH-MPC} until the optimal length of the horizon has been computed. This might be very expensive from the computational viewpoint, if the initial estimate of the horizon is too conservative. In~\cite{Scokaert1998}, the authors suggest to use $N^0 = 0$ to reduce the computational load. Section~\ref{sec:sec_clqr_time_splitting} shows how to overcome this issue by exploiting the structure of Problem~\eqref{eq:IH-MPC}. 
\captionsetup[algorithm]{font=normal}
\begin{algorithm}[t]
 \fontsize{8}{8}\selectfont
\begin{algorithmic}  
 \State{Given $N= N^{0}$, $K$, $\mathcal X_f$, $x_{\textrm{init}}$.}
\State{{1.} Solve the \ahmpc~Problem~\eqref{eq:IH-MPC}.}
\If {$x_{N}\notin \mathcal X_f$}
\State{{2.} $N = N+1$.}
\State{3. Return to Step 1.}
\EndIf
\vspace{0.01in}
\State{$\mathbf u^{\textrm{\ahmpc}}=\{u_0,\ldots,u_{N-1},Kx_{N},Kx_{N+1},\ldots\}$.}
 \caption{Constrained LQR~\cite{Scokaert1998}.}
 \label{alg:scokaert}
 \end{algorithmic} 
\end{algorithm}  

\subsection{Fast Alternating Minimization Algorithm}
\label{subsec:fama}
Our design relies on the use of splitting methods. In particular, we exploit the fast alternating minimization algorithm (FAMA)~\cite{Goldstein2014}. FAMA solves the following problem:
\begin{subequations}
\label{eq:fama}
\begin{align}
\operatorname{minimize}~&f(x)+g(y)\\
\operatorname{subject~to:~}& H_xx+H_y y= d.
\end{align}
\end{subequations}
The functions $f$ and $g$ satisfy the following assumptions:
\begin{assumption}
\label{ass:fama_1}
$f$ is strongly convex with convexity parameter $\sigma_f$.
\end{assumption}
\begin{assumption}
\label{ass:fama_2}
$g$ is a convex function not necessarily smooth.
\end{assumption}
FAMA (described in Algorithm~\ref{alg:fama}) is equivalent to apply the fast
proximal gradient method (e.g., FISTA~\cite{Beck2009}) on the dual function of
Problem~\eqref{eq:fama}, as detailed in~\cite{Goldstein2014}.
\captionsetup[algorithm]{font=normal}
\begin{algorithm}[t]
 \fontsize{8}{8}\selectfont
\begin{algorithmic}  
 \State{Given $H_x$, $H_y$, $c$, $\mu_{\textrm{init}}$, $\tau<{\sigma_f}/{\operatorname{eig}_{\max}(H_x)}$.}
 \State{Initialize $\hat \mu^1 = \mu^0 = \mu_{\textrm{init}}$, $\alpha^0=1$, $\alpha^1 = (1+\sqrt{5})/2$.}
\For{$k=1, 2\ldots$}
\State{{1.} $x^{k} = \argmin_x f(x)+\langle\hat\mu^k,-H_xx \rangle$.}
\State{{2.} $y^{k} = \argmin_y g(y)+\langle\hat\mu^k,-H_yy \rangle+\frac{\tau}{2}\|d-H_xx^k-H_yy\|^2$.}
\State{{3.} $\mu^k = \hat \mu^k +\tau(d-H_xx^k-H_yy^k)$.}
\State{{4.} $\alpha^{k+1} = (1+\sqrt{4{\alpha^{k}}^2+1})/2$.}
\State{{5.} $\hat \mu^{k+1} = \mu^k + (\alpha^k-1)(\mu^k-\mu^{k-1})/\alpha^{k+1}$.}
\EndFor
 \caption{Fast alternating minimization algorithm~\cite{Goldstein2014}.}
 \label{alg:fama}
 \end{algorithmic}  
\end{algorithm}  
FAMA can handle problems in which $x\in\mathbb C$, i.e., when $x$ belongs to
a convex set $\mathbb C$. Both in the unconstrained and constrained scenarios,
if Assumptions~\ref{ass:fama_1} and~\ref{ass:fama_2} are satisfied, it is possible to show that the FAMA has a theoretical convergence rate to the optimal solution for Problem~\eqref{eq:fama} of $\mathcal O(\frac{1}{k^2})$~(refer to \cite{Goldstein2014,Pu2014} for more details).
\ifpaper~
The following lemmas are useful for the remainder of the paper:
\begin{lem}
\label{lem:Pu1}[Lemma 3.4 in~\cite{Pu2014}]
Let $\mathbb C$ be a convex cone. The conjugate function of the indicator function of the set $\mathbb S := \{v|-v\in\mathbb C\}$ is equal to the indicator function of the dual cone of $\mathbb C$, i.e., $\mathcal I_{\mathbb S}^{\star} (v) = \mathcal I_{\mathbb C^{\star}}(v)$.
\end{lem}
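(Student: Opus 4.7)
The plan is to unwind the definition of the Fenchel conjugate and exploit the standard fact that the support function of a convex cone takes only the values $0$ and $+\infty$.

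First, I would write
\[
\mathcal I_{\mathbb S}^{\star}(v)=\sup_{u}\bigl\{\langle u,v\rangle-\mathcal I_{\mathbb S}(u)\bigr\}=\sup_{u\in\mathbb S}\langle u,v\rangle,
\]
so the problem reduces to computing the support function of $\mathbb S$. Since $\mathbb S=\{u\,|\,-u\in\mathbb C\}=-\mathbb C$, the change of variables $u=-w$ with $w\in\mathbb C$ turns this into
\[
\mathcal I_{\mathbb S}^{\star}(v)=\sup_{w\in\mathbb C}\langle w,-v\rangle.
\]

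Second, I would use the cone structure of $\mathbb C$. Because $tw\in\mathbb C$ for every $w\in\mathbb C$ and every $t\geq 0$, the linear functional $w\mapsto\langle w,-v\rangle$ is either identically nonpositive on $\mathbb C$, in which case the supremum is $0$ (attained at $w=0$), or it takes some strictly positive value, in which case scaling by arbitrarily large $t>0$ drives the supremum to $+\infty$. Hence the conjugate equals $0$ precisely when $\langle w,-v\rangle\leq 0$ for all $w\in\mathbb C$, equivalently $\langle w,v\rangle\geq 0$ for all $w\in\mathbb C$, which by definition is exactly $v\in\mathbb C^{\star}$, and the conjugate is $+\infty$ otherwise. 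This matches $\mathcal I_{\mathbb C^{\star}}(v)$ pointwise, giving the claim.

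The main obstacle is merely bookkeeping: being careful with the sign when identifying $\mathbb S$ with $-\mathbb C$ and lining up the sign convention of the dual cone $\mathbb C^{\star}$ used in the statement. No nontrivial analysis is required beyond the cone-scaling dichotomy.
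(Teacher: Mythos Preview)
Your argument is correct: the conjugate of an indicator is the support function, and the scaling dichotomy on a cone reduces the support function of $-\mathbb C$ to the indicator of $\mathbb C^{\star}$; the sign bookkeeping is handled properly. The paper itself does not supply a proof of this lemma---it merely quotes the result as Lemma~3.4 of~\cite{Pu2014}---so there is no in-paper argument to compare against, and your derivation is the standard one.
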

\begin{lem}
\label{lem:Pu2}[Lemma 3.5 in~\cite{Pu2014}]
Let $\mathbb C$ be the nonnegative orthant $\mathbb C :=\{v\,|\,v\geq 0\}$. For any $v\in \mathbb R^{N_{\mathbb C}}$, the point $z\in\boldsymbol{\operatorname{Pr}}_{\mathbb C}(v)-v$ satisfies $z\in\mathbb C$.
\end{lem}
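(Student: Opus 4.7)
The plan is to interpret the statement as $z := \mathbf{Pr}_{\mathbb C}(v) - v$ (the ``$\in$'' in the statement is read as an assignment/definition of a specific point), and then verify membership in $\mathbb C$ componentwise using the explicit closed form for projection onto the nonnegative orthant.

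First I would recall that when $\mathbb C = \{v \mid v \geq 0\}$, the Euclidean projection decouples across coordinates: for each index $i$, the optimization
\begin{equation*}
\min_{w_i \geq 0} \tfrac{1}{2}(w_i - v_i)^2
\end{equation*}
has the unique solution $w_i^\star = \max(v_i, 0)$. Hence $\mathbf{Pr}_{\mathbb C}(v)_i = \max(v_i, 0)$ for all $i$.

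Next I would substitute this into the definition of $z$. A two-case analysis gives, for each coordinate, $z_i = \max(v_i,0) - v_i$, which equals $0$ when $v_i \geq 0$ and equals $-v_i > 0$ when $v_i < 0$. In both cases $z_i \geq 0$, so $z \geq 0$ componentwise, i.e., $z \in \mathbb C$, as claimed.

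The argument is essentially a one-line computation, so there is no genuine obstacle; the only subtlety is notational (clarifying that $z$ denotes the specific point $\mathbf{Pr}_{\mathbb C}(v) - v$ rather than an arbitrary element of a set). Alternatively, and perhaps more informatively in the context of the paper, one can derive the same fact abstractly from Moreau's decomposition: for any closed convex cone $\mathbb K$, every $v$ admits the decomposition $v = \mathbf{Pr}_{\mathbb K}(v) + \mathbf{Pr}_{-\mathbb K^\star}(v)$ with the two summands orthogonal. Applying this to $\mathbb K = \mathbb C$ (self-dual), one obtains $\mathbf{Pr}_{\mathbb C}(v) - v = -\mathbf{Pr}_{-\mathbb C}(v) \in \mathbb C$, which recovers the result and makes clear why the lemma is tailored to FAMA's dual-variable update with nonnegativity (inequality) constraints.
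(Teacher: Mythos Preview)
Your proof is correct: the componentwise computation $z_i = \max(v_i,0) - v_i \geq 0$ is exactly the elementary verification needed, and your Moreau-decomposition remark is a valid and more conceptual alternative. The paper itself does not prove this lemma; it is merely quoted from~\cite{Pu2014}, so there is nothing further to compare.
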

\else 
\fi
\section{CONSTRAINED LQR WITH ADAPTIVE DECOMPOSITION ALONG THE HORIZON}
\label{sec:sec_clqr_time_splitting}
This section presents our proposed approach to solve Problem~\eqref{eq:clqr}.
In particular, compared to the decomposition technique used
in~\cite{StathopoulosTAC2015}, our approach relies on the decomposition along
the length of the prediction horizon proposed in~\cite{StathopoulosECC13} that
allows one to solve $N+1$ smaller subproblems (in place of
Problem~\eqref{eq:clqr}) of size independent of the length of the horizon, which is an appealing quality when this quantity is unknown and potentially large.

According to the time-splitting strategy presented in~\cite{StathopoulosECC13}, we introduce a new set of decision variables $z_t$ ($t = 1,\ldots,N$) 
to break up the dynamic coupling~\eqref{eq:clqr_equality_constraints}. Furthermore, we add a new set of decision variables $\sigma_t$ ($t=0,\ldots,N$) to handle the inequality 
constraints~\eqref{eq:IH-MPC_inequality_constraints}. Then, we reformulate Problem~\eqref{eq:IH-MPC} as follows:
\begin{subequations}
\label{eq:clqr_time_splitting}
\begin{align}
&\min\limits_{x,u}~\frac{1}{2}\sum\limits_{t = 0}^{N}
x_t^{(t)^{\textrm{T}}}Q_tx_t^{(t)}+u_t^{(t)^{\textrm{T}}}R u_t^{(t)}+\mathcal
I_{\mathbb C}(\sigma_t)\\
\label{eq:time_splitting_MPC_problem_consensus_1}
&\textrm{s.t.:~} z_{t+1} = Ax_t^{(t)}+Bu_t^{(t)} \quad t = 0,\ldots, N-1\\
\label{eq:time_splitting_MPC_problem_consensus_2}
&~\quad~ z_{t+1} = x_{t+1}^{(t+1)} \quad\quad\quad\quad~ t = 0,\ldots,N-1\\
\label{eq:time_splitting_MPC_problem_ineq_constraints}
&~\quad~ \sigma_t =d - Cx_t^{(t)}-Du_t^{(t)} \quad\quad~ t=0,\ldots,N\\
&~\quad~ x_0^{(0)} = x_{\textrm{init}},
\end{align}
\end{subequations}
where, defining $\mathbb C = \{\sigma\in \mathbb R^{p}\,|\,\sigma\geq 0\}$, the indicator functions $\mathcal I_{\mathbb C}(\sigma_t)$ has been added to the 
cost~\eqref{eq:IH-MPC_cost} to 
penalize the inequality constraint
violations~\eqref{eq:IH-MPC_inequality_constraints}, and $Q_t = Q$ for $t
=0,\ldots,N-1$ and $Q_t = P$ for $t=N$. Note that if the consensus constraints
are satisfied, Problem~\eqref{eq:clqr_time_splitting} is equivalent to Problem~\eqref{eq:IH-MPC}. Hence, this implies that computing $N$ for Problem~\eqref{eq:clqr_time_splitting}
 is equivalent to computing $N$ for Problem~\eqref{eq:IH-MPC}. Consequently, the results of Theorem~\ref{thm:scokaert} hold and we can rely on the existence of a finite value 
 $N^\infty\leq N$, such that $x_{N^{\infty}}$ is in the terminal set $X_f$.
 
In the following, first, we show that we can use FAMA to solve Problem~\eqref{eq:clqr_time_splitting}. Second, we present our proposed algorithm to compute $N$ and a solution for
 Problem~\eqref{eq:IH-MPC}. Finally, we show that the control sequence obtained using the proposed algorithm is an optimal solution of Problem~\eqref{eq:clqr}. 

Let $f(\mathbf{x,u}) = \sum_{t=0}^{N^{}} 
x_t^{(t)^{\textrm{T}}}Q_tx_t^{(t)}+u_t^{(t)^{\textrm{T}}}R u_t^{(t)}$ and
$g(\boldsymbol{\sigma,z}) = \sum_{t = 0}^{N^{}} \mathcal I_{\mathbb C} (\sigma_t)$.
If Assumption~\ref{ass:positive_def} is satisfied, $f(\mathbf{x,u})$ satisfies Assumption~\ref{ass:fama_1} with $\sigma_f = \operatorname{eig}_{\min} 
(\operatorname{blockdiag}\{Q,R\})$. In addition, given that $\mathbb C$ is a convex set and the associated indicator function is convex, $g(\boldsymbol{\sigma,z})$ satisfies 
Assumption~\ref{ass:fama_2}. Hence, we can rely on FAMA to solve Problem~\eqref{eq:clqr_time_splitting}. In particular, given that FAMA operates on the dual space, 
we formulate the dual of Problem~\eqref{eq:clqr_time_splitting} as follows:
\begin{equation}
\underset{{\boldsymbol \mu}}{\operatorname{maximize}}~ D(\boldsymbol \mu),
\end{equation}
where the dual function is defined as follows:
\begin{align*} 
\mathbf{D}(\boldsymbol \mu) =& \min_{\mathbf y,\boldsymbol \sigma, \mathbf z} f(\boldsymbol y)+g(\boldsymbol {\sigma,z})+\sum_{t=0}^{N^{}}\langle\lambda_t,-Gy_t-\sigma_t+d\rangle\\
&+\sum_{t=1}^{N^{}} \langle w_t, z_t-H_1y_t\rangle+\langle v_{t},z_t-H_2y_{t-1}\rangle,
\end{align*}
 $\boldsymbol \mu^{\trans} = [w_1^{\trans},\ldots,w_{N}^{\trans},v_1^{\trans},\ldots,v_{N}^{\trans},\lambda_0^{\trans},\ldots,\lambda_{N}^{\trans}]$, $\mathbf y^{\trans} = [y_0^{\trans},\ldots,y_{N}^{\trans}]$, $y_t^{\trans}=[x_t^{(t)^{\trans}},u_t^{(t)^{\trans}}]$, $\boldsymbol\sigma^{\trans} =[\sigma_0^{\trans},\ldots,\sigma_{N^{}}^{\trans}]$, $\mathbf z^{\trans} =[z_1^{\trans},\ldots,z_{N^{}}^{\trans}]$, $H_1 = [I_n~0]$, $H_2 = [A~B]$, $G=[C~D]$. 

\captionsetup[algorithm]{font=small}
\begin{algorithm}[t]
\fontsize{8}{8}\selectfont
\begin{algorithmic} 
\State{Given $x_{\textrm{init}}$, $N_{\textrm{init}}$, $H_1$, $H_2$, $\mathcal Q$, $G$, $g$, $\tau$, $\eta$, and $\bar k$, $\mathcal X_f$, $K$, and $\tilde H$.}
\State{Set $N^0 = N_{\textrm{init}}$, $\alpha^0=1$, $\alpha^1 =
{(1+\sqrt{5})}/{2}$, $\mu_t^0=\hat \mu_t^1=\mu_t^{\textrm{start}}$.}
\For{$k= 0,\ldots, \bar k$}
  \State{{1.a}~For $t=0$, $u_0^{{k+1}} =\argmin_{u_0}~\mathcal L_0$ and  $y_0^{k+1}:=\left[x_{\textrm{init}}^{\textrm{T}}~u_0^{{{k+1}^{\textrm{T}}}}\right]^{\textrm{T}}$.}
 \State{{1.b}~For $t\!=\!1,\ldots,N^{k-1}\!-\!1$, $y_t^{k+1}
 =\argmin_{y_t}~\!\!\!\mathcal L_t.$}
 \State{{1.c}~For $t\!=\!N^{s}$, $x_t^{{k+1}}
 =\argmin_{x_t}~\mathcal L_N$ and $y_t^{k+1}\!:=\!\left[x_t^{{{k+1}^{\textrm{T}}}}~0\right]^{\textrm{T}}$.}
 \State{{2.}~Compute $\alpha^{k+1}=\frac{1+\sqrt{4\alpha^{k^2}+1}}{2}$.}
 \State{{3.}~For $t=0,\ldots,N^s$, $\sigma_t^{k+1}=\mathbf{Pr}_{\mathbb C}\left(G\hat y_t^{k+1}-d-\frac{1}{\tau}\hat \lambda_t^k\right)$.}
 \State{{4.}~ For $t=1,\ldots,N^s$, 
 $z_t^{k+1} = \frac{H_1\hat y_t^{k+1}+H_2 \hat y_{t-1}^{k+1}}{2}-\frac{\hat w_t^k+\hat v_t^k}{2\tau}$.}
 \State{{5}~For $t=0,\ldots,N^s$ compute \begin{align*} 
 \mu_t^{k+1} &= \hat \mu_t^{k}-\tau \tilde H\begin{bmatrix}
 y_{t-1}^{k+1}\\
 y_{t}^{k+1}\\
 z_t^{k+1}\\
 \sigma_t^{k+1}
 \end{bmatrix} + \tau\begin{bmatrix}
 0\\ 0\\ d
 \end{bmatrix},\\
  \hat \mu_t^{k+1} & = \mu_t^{k+1} +\frac{\alpha^k-1}{\alpha^{k+1}}\left(\mu_t^{k+1}-\mu_t^{k}\right).
 \end{align*}}
 \EndFor
 \caption{FAMA for Problem~\eqref{eq:clqr_time_splitting}.}
 \label{alg:fama_splitting_solver}
 \end{algorithmic} 
\end{algorithm}
\captionsetup[algorithm]{font=small}
\begin{algorithm}[t]
\fontsize{8}{8}\selectfont
\begin{algorithmic} 
\State{Given $x_{\textrm{init}}$, $N_{\textrm{init}}$, $H_1$, $H_2$, $Q$, $R$,
$P$, $G$, $g$, $\tau$, $\eta$, and $\bar k$, $\mathcal X_f$, $K$, and $\tilde
H$.}
\State{Set $N^0 = N_{\textrm{init}}$, $\alpha^0=1$, $\alpha^1 =
{(1+\sqrt{5})}/{2}$, $\mu_t^0=\hat \mu_t^1=\mu_t^{\textrm{start}}$, and $s=0$.}
\If {$x_{\textrm{init}}\in \mathcal X_f$}
\vspace{0.02in}
\State{1. $u^{\infty}=Kx_{\textrm{init}}$, $N = 0$.}
\Else 
\While{Termination criteria are not met}
\vspace{0.02in}
\vspace{0.02in}
  \State{2. $\left [\mathbf y^{s+1}, \hat{\boldsymbol \mu}^{s+1}, \boldsymbol \mu^{s+1}, \alpha^{s+1}\right ] =$\verb!fama!$\left(\hat {\boldsymbol\mu}^{s},{\boldsymbol\mu}^{s}, \alpha^{s}, N^s,\bar k\right)$.}
 \vspace{0.02in}
 \If{$x_N^{s+1}\in\mathcal X_f$}
 \vspace{0.02in}
  \State{3. $N^{s+1} = N^{s}-1$.}
  \vspace{0.02in}
  \State{4. Remove Subproblem~$N^{s}$. }
  \vspace{0.02in}
  \Else
  \vspace{0.02in}
  \State{5. $N^{s+1} =N^{s}+1$.}
  \vspace{0.02in}
  \State{6. Add Subproblem~$N^{s+1}$.}
  \vspace{0.02in}
  \EndIf
 \State{7. $s=s+1$.}
  \vspace{0.02in}
 \EndWhile
 \State{8. $u^{\infty} = u_0$, $N = N^s$.}
 \EndIf
 \caption{CLQR for Problem~\eqref{eq:clqr_time_splitting}.}
 \label{alg:fama_splitting}
 \end{algorithmic} 
\end{algorithm}
Algorithm~\ref{alg:fama_splitting} details our strategy to estimate the solution of Problem~\eqref{eq:clqr_time_splitting} and the length of the horizon $N$. In particular, 
Algorithm~\ref{alg:fama_splitting} relies on Algorithm~\ref{alg:fama_splitting_solver}, which is Algorithm~\ref{alg:fama} applied to Problem~\eqref{eq:clqr_time_splitting}, to 
compute the primal and dual variables (step 2). In particular, (step 1 of Algorithm~\ref{alg:fama_splitting_solver}) $\mathcal L_t$ is the Lagrangian associated with
 Problem~\eqref{eq:clqr_time_splitting} and defined as follows:
\begin{align*}
\mathcal L_t = \min&~ f(y_t)+g({\sigma_t,z_t})+\langle\lambda_t,-Gy_t-\sigma_t+d\rangle\\
&+\langle w_t, z_t-H_1y_t\rangle+\langle v_{t+1},z_{t+1}-H_2y_{t}\rangle.
\end{align*} 
Furthermore, (step 5 of Algorithm~\ref{alg:fama_splitting_solver}) $\tilde H$ is the matrix associated with the multiplier update, i.e., 
\begin{equation*}
\tilde H := \begin{bmatrix}
 0   \!\!&\! H_1 \!\!&\! -I_n \!\!&\! 0\\
 H_2 \!\!&\! 0   \!\!&\!-I_n  \!\!&\! 0\\
 0   \!\!&\! -G  \!\!&\! 0    \!\!&\!-I_p
 \end{bmatrix}.
\end{equation*}
For $t=0$ only $\lambda_0$ is updated, given that $z_t, w_t,$ and $v_t$ are defined for $t=1,\ldots,N^{}$.

It is evident that compared to Algorithm~\ref{alg:fama_splitting_solver},
Algorithm~\ref{alg:fama_splitting} has an additional \verb!if! condition used to check every $\bar k \geq 1$ iterations whether 
$x_{N^{s}}$$\in\mathcal X_f$. If $x_0\in\mathcal X_f$ the algorithm terminates
immediately (step 1). Otherwise, the algorithm terminates only when $x_{N^{s
}}$$\in\mathcal X_f$ and $\{\mathbf y$, $\boldsymbol \mu\}$ returned by
Algorithm~\ref{alg:fama_splitting_solver} reaches a desired accuracy. If
$x_{N^{s }}$$\in\mathcal X_f$, the algorithm decreases $N^s$.
From the splitting perspective, this means that the last subproblem is
removed (steps 3 and 4). Note that removing a subproblem with its associated
dual variables does not compromise the future updates of the remaining subproblems. If $x_{N^{s }}$$\notin\mathcal X_f$, $N^s$ increases by 1 with respect to the previous iterate 
(steps 5 and 6). From the splitting perspective, this means that a new
subproblem (of the same dimension as the previous ones) is added.
\begin{rem}
\label{rem:update}
In theory, we can set $\bar k = 1$, i.e., the algorithm checks the state of the last subproblem at every iteration. In practice, we noticed that checking the state of the last 
subproblem at every iteration affects the convergence of $N^s$ to $N^{\infty}$ given that $N^s$ oscillates around $N^{\infty}$ requiring an higher number of iterations. 
If we allow larger $\bar k$ the oscillations disappear and $N^s$ converges faster to $N^{\infty}$. From the FAMA perspective, a larger $\bar k$ means that at each outer iteration
 $s$ of Algorithm~\ref{alg:fama_splitting}, Problem~\eqref{eq:clqr_time_splitting} is solved up to a given accuracy (which depends on $\bar k$). Then, the quality of the estimates
  is refined every $\bar k$ iterations together with $N^s$. 
\end{rem}
\begin{rem}
\label{rem:modified_N}
Step 8 of Algorithm~\ref{alg:fama_splitting} can be modified to achieve a tighter upper bound on $N^{\infty}$. In particular, by using
 $\mathbf u_N = \{u_0^{(0)},\ldots,u_{N^{}-1}^{(N^{}-1)},Kx_{N^{}}\}$ we can compute $x_{N}^{(0)}$. Then, while $C x_{N}^{(0)}+ Du_N^{(0)}< d$, $N\leftarrow N-1$. 
 The first time the constraints are active, the algorithm terminates. This does not affect the computational time of the algorithm (given that the solution has been already computed), 
 but can improve the initial guess on the length of the horizon for the next problem instance in a closed-loop implementation.
\end{rem}
According to~\cite{Pu2014} the following result concerning the quality of the primal estimates holds:
\begin{thm}
\label{thm:pu2014}
Consider Problem~\eqref{eq:clqr_time_splitting}. Let $\{\mathbf y^k\}$ and $\{\boldsymbol \mu^k\}$ be generated by~Algorithm~\ref{alg:fama_splitting}. If 
Assumption~\ref{ass:positive_def} is satisfied, then, for any $s\geq 0$ and $\bar k\geq 0$, the following holds:
\begin{align*}
\mathbf D(\boldsymbol\mu^*)-\mathbf D (\boldsymbol \mu^s) \!\leq\! \frac{2\operatorname{eig}_{\max}(H_{\mathbf y})}{\sigma_f(s\bar k+1)^2} \|
\boldsymbol \mu^0-\boldsymbol\mu^*\|^2,
\end{align*}
where $\boldsymbol\mu^0$ and $\boldsymbol \mu^*$ are the initial and optimal values of multipliers, respectively, and $H_{\mathbf y}$ is defined as follows:
\begin{equation*}
H_{\mathbf y}:= \operatorname{blockdiag}\{\underbrace{H_1,\ldots,H_1}_{ N^{}}, \underbrace{H_2,\ldots,H_2}_{ N^{}}, \underbrace{-G,\ldots,-G}_{ (N^{}+1)}\},
\end{equation*} 
If $\lambda_t^0\in \mathbb C$ ($t=0,\ldots, N^{}$) and $\boldsymbol y^0$ is such that the consensus constraints are satisfied, then the dual iterates will remain feasible for 
all $k\geq 1$ and
\begin{align}
\label{eq:thm_pu_2}
\|\mathbf y^s-\!\mathbf y^*\|^2 \!\leq\! \frac{4\operatorname{eig}_{\max}(H_{\mathbf y})}{\sigma_f(s\bar k+1)^2}\|
\boldsymbol \mu^0-\boldsymbol\mu^*\|^2.
\end{align}
\end{thm}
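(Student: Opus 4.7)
The plan is to recast Problem~\eqref{eq:clqr_time_splitting} into the FAMA template~\eqref{eq:fama} and then invoke the accelerated convergence analysis of~\cite{Pu2014}. First I would collect the primal variables into $\mathbf y$, with blocks $y_t = [x_t^{(t)^{\trans}},u_t^{(t)^{\trans}}]^{\trans}$, and the auxiliary variables into $(\boldsymbol\sigma,\mathbf z)$, which carry the inequality slacks and the consensus states. With this grouping the cost separates as $f(\mathbf y) + g(\boldsymbol\sigma,\mathbf z)$, with $f$ the aggregated quadratic term and $g(\boldsymbol\sigma,\mathbf z) = \sum_{t=0}^{N}\mathcal{I}_{\mathbb C}(\sigma_t)$. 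Under Assumption~\ref{ass:positive_def}, $f$ is strongly convex with modulus $\sigma_f = \operatorname{eig}_{\min}(\operatorname{blockdiag}\{Q,R\})$, so Assumption~\ref{ass:fama_1} holds; $g$ is a sum of indicators of a convex cone, hence convex but nonsmooth, so Assumption~\ref{ass:fama_2} also holds. The coupling equations~\eqref{eq:time_splitting_MPC_problem_consensus_1}--\eqref{eq:time_splitting_MPC_problem_ineq_constraints} then assemble into the compact form $H_{\mathbf y}\mathbf y + H_{(\sigma,z)}(\boldsymbol\sigma,\mathbf z) = \bar d$ with $H_{\mathbf y}$ block diagonal, as displayed in the statement.

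Next, I would verify that the inner loop of Algorithm~\ref{alg:fama_splitting_solver} is exactly FAMA applied to this splitting, so that the partitioned multiplier $\boldsymbol \mu = [w,v,\lambda]$ matches the generic dual variable of Algorithm~\ref{alg:fama}. A standard Moreau--Fenchel argument, which uses strong convexity of $f$ to transfer smoothness to $f^{\star}$, shows that $\mathbf D(\boldsymbol\mu)$ is concave and differentiable with Lipschitz-continuous gradient of constant $\operatorname{eig}_{\max}(H_{\mathbf y})/\sigma_f$ (recall the paper's convention that $\operatorname{eig}_{\max}(H_{\mathbf y})$ is already the squared operator norm). Since the inner FAMA loop is FISTA on $-\mathbf D$, the accelerated $\mathcal O(1/k^2)$ rate yields the first claim with the prescribed constant, counting the total number of inner FAMA steps as $s\bar k$ across the outer loop of Algorithm~\ref{alg:fama_splitting}.

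The primal bound~\eqref{eq:thm_pu_2} would then follow by one more use of $\sigma_f$-strong convexity of $f$. The standard inequality $\|\mathbf y^s - \mathbf y^{\star}\|^2 \leq \frac{2}{\sigma_f}\bigl[\mathbf D(\boldsymbol\mu^{\star}) - \mathbf D(\boldsymbol\mu^s)\bigr]$, valid whenever the dual iterates lie in the effective domain of $\mathbf D$, combines with the dual rate to produce the extra factor of $2$, giving the constant $4\operatorname{eig}_{\max}(H_{\mathbf y})/\sigma_f$.

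The main obstacle is certifying that the dual iterates remain admissible under the stated initializations $\lambda_t^0 \in \mathbb C$ and consensus-feasible $\mathbf y^0$, since the primal bound is stated conditionally on that invariant. This is exactly where Lemmas~\ref{lem:Pu1} and~\ref{lem:Pu2} intervene: the first identifies the conjugate of the indicator on $\{v\,|\,-v\in\mathbb C\}$ as the indicator on the dual cone $\mathbb C^{\star}$, so that the proximal step on $g$ is a projection onto $\mathbb C^{\star}$ in the dual picture; the second guarantees that the projection residual $\mathbf{Pr}_{\mathbb C}(v) - v$ itself lies in $\mathbb C$ for the nonnegative orthant. Applying these to step~3 of Algorithm~\ref{alg:fama_splitting_solver} and propagating through the multiplier update of step~5 gives an induction showing $\lambda_t^{k+1}\in\mathbb C$ for all $k\geq 0$. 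Once feasibility is secured, the two displayed bounds follow as instances of Theorem~3.6 of~\cite{Pu2014}.
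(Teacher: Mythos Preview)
Your approach is essentially the paper's own: the authors' proof is a single sentence citing Theorem~5.3 of~\cite{Pu2014} applied to Problem~\eqref{eq:clqr_time_splitting}, and your proposal is a faithful expansion of what that citation entails (casting the split problem into the FAMA template, checking Assumptions~\ref{ass:fama_1}--\ref{ass:fama_2}, reading off the Lipschitz constant of $\nabla\mathbf D$, and using Lemmas~\ref{lem:Pu1}--\ref{lem:Pu2} for the dual-feasibility invariant). One small caution: the intermediate inequality you invoke for the primal bound, $\|\mathbf y^s-\mathbf y^{\star}\|^2\le \tfrac{2}{\sigma_f}\bigl[\mathbf D(\boldsymbol\mu^{\star})-\mathbf D(\boldsymbol\mu^s)\bigr]$, would yield $4\operatorname{eig}_{\max}(H_{\mathbf y})/\sigma_f^{2}$ rather than the stated $4\operatorname{eig}_{\max}(H_{\mathbf y})/\sigma_f$, so either that step or the displayed constant needs a second look; in~\cite{Pu2014} the primal estimate is obtained directly rather than by composing with the dual gap, which avoids this mismatch.
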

\begin{proof}
The proof follows from the one of Theorem 5.3 in~\cite{Pu2014} applied to Problem~\eqref{eq:clqr_time_splitting}.
\end{proof}
Then, the following result holds:
\begin{thm}
\label{thm:ferranti}
Consider Problem~\eqref{eq:clqr_time_splitting}. Let $N$ and $\mathbf u_N = \{u_0^{(0)},\ldots,u_{N^{}-1}^{(N^{}-1)},Kx_{N^{}}\}$ be generated 
by~Algorithm~\ref{alg:fama_splitting}. Under the same assumptions of Theorem~\ref{thm:pu2014} then, for all $s\geq 1$ and $\bar k\geq 1$, there exists $ N^{\infty}\leq N$ 
finite such that  
\begin{align}
\label{eq:thm3}
(\mathbf u_{N^{\infty}}^{s}\!\!-\!\bar{\mathbf u}^{\infty})^{\trans}\!\tilde B(\mathbf u_{N^{\infty}}^{s}\!\!-\!\bar{\mathbf u}^{\infty})\!\leq\!\!
 \frac{4\operatorname{eig}_{\max}(H_{\mathbf y})}{\sigma_f(s\bar k+1)^2}\|
\boldsymbol \mu^0\!\!\!-\!\boldsymbol\mu^*\!\|^2,
\end{align}
where $\bar{\mathbf u}^{\infty} = \{u_0,\ldots,u_{N^{\infty}}\}$ is the (truncated) solution of the CLQR Problem and $\tilde B$ is defined as follows:
\begin{equation*}
\tilde B:=\begin{bmatrix}
I_m & 0 & \ldots & 0\\
B  & I_m &  & 0\\
\vdots &\vdots & \ddots & 0\\
A^{N^{\infty}-1}B & A^{N^{\infty}-2}B& \ldots& I_m  
\end{bmatrix}.
\end{equation*}
\end{thm}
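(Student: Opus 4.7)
The plan is to combine three ingredients already established in the paper: (i) the existence of a finite $N^{\infty}$ from Theorem~\ref{thm:scokaert}; (ii) the $\mathcal O(1/k^2)$ convergence of the primal iterates from Theorem~\ref{thm:pu2014}; and (iii) the linear map between control moves and state trajectories induced by $x_{t+1}=Ax_t+Bu_t$.

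First, I would invoke Theorem~\ref{thm:scokaert}. Under Assumptions~\ref{ass:stabilizability} and~\ref{ass:positive_def}, a finite $N^{\infty}=N^{\infty}(x_{\textrm{init}})$ exists. Because the \textbf{while}-loop in Algorithm~\ref{alg:fama_splitting} exits only when $x_{N^{s}}\in\mathcal X_f$ \emph{and} the pair $\{\mathbf y^s,\boldsymbol\mu^s\}$ has reached the desired accuracy, the returned horizon satisfies $N\geq N^{\infty}$, and the first $N^{\infty}$ stages of the optimizer $\mathbf u^{*}$ of Problem~\eqref{eq:clqr_time_splitting} coincide with the truncation $\bar{\mathbf u}^{\infty}$ of the CLQR solution. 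This identifies the fixed-point of the iterates with $\bar{\mathbf u}^{\infty}$ on the first $N^{\infty}$ stages.

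Second, since the hypotheses of Theorem~\ref{thm:pu2014} hold, the bound \eqref{eq:thm_pu_2} applies, namely
\begin{equation*}
\|\mathbf y^s-\mathbf y^{*}\|^2\leq\frac{4\operatorname{eig}_{\max}(H_{\mathbf y})}{\sigma_f(s\bar k+1)^2}\,\|\boldsymbol\mu^0-\boldsymbol\mu^{*}\|^2.
\end{equation*}
Because $\mathbf y^s$ stacks both the states $x_t^{(t)}$ and the controls $u_t^{(t)}$, the left-hand side dominates any non-negative quadratic form built from those differences. Third, and this is the crucial step, I would exploit the dynamics to re-express the state residuals as linear images of the control residuals. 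Since the primal feasibility condition in Theorem~\ref{thm:pu2014} keeps the consensus constraints \eqref{eq:time_splitting_MPC_problem_consensus_1}--\eqref{eq:time_splitting_MPC_problem_consensus_2} respected, telescoping $x_{t+1}=Ax_t+Bu_t$ with common initial condition $x_{\textrm{init}}$ gives
\begin{equation*}
x_t^s-x_t^{*}=\sum_{i=0}^{t-1}A^{t-1-i}B\bigl(u_i^s-u_i^{*}\bigr),\qquad t=1,\ldots,N^{\infty}.
\end{equation*}
Stacking these identities together with the trivial ones $u_t^s-u_t^{*}=I_m(u_t^s-u_t^{*})$ produces exactly the block-lower-triangular matrix $\tilde B$ acting on $\mathbf u^s_{N^{\infty}}-\bar{\mathbf u}^{\infty}$. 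Discarding the (non-negative) contribution of the stages beyond $N^{\infty}$ inside $\|\mathbf y^s-\mathbf y^{*}\|^2$ and substituting the rate bound above then delivers \eqref{eq:thm3}.

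I expect the main obstacle to be the bookkeeping in step three: one must line up the ordering of the entries of $\mathbf y^s-\mathbf y^{*}$ with the block-row ordering that makes the combined $(u,x)$-assembly coincide with $\tilde B(\mathbf u^s_{N^{\infty}}-\bar{\mathbf u}^{\infty})$, and argue that the redundant state copies $z_t=x_t^{(t)}$ present at the consensus fixed point can be dropped without loss in the inequality. Once this algebra is in place, \eqref{eq:thm3} is an immediate consequence of \eqref{eq:thm_pu_2} and the dynamic propagation above, while finiteness of $N^{\infty}\leq N$ is inherited from Theorem~\ref{thm:scokaert}.
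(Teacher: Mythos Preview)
Your proposal is correct and follows essentially the same route as the paper: invoke Theorem~\ref{thm:scokaert} (via the consensus-preserving initialization) to secure a finite $N^{\infty}$ and the identification $\mathbf u^{*}=\bar{\mathbf u}^{\infty}$, then telescope the dynamics to write $x_t^s-x_t^{\infty}=\sum_{i=0}^{t-1}A^{t-1-i}B(u_i^s-u_i^{\infty})$ and bound the resulting quadratic by~\eqref{eq:thm_pu_2}. If anything, your outline is more explicit than the paper's own (very terse) argument about the bookkeeping with $\tilde B$ and the discarded tail stages.
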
 
\ifpaper
\begin{proof}
Given that we initialize $\mathbf y^0$ to achieve consensus, according to Theorem~\ref{thm:pu2014}, $\mathbf z^s$ will remain in consensus, and solving 
Problem~\eqref{eq:clqr_time_splitting} becomes equivalent to solving~Problem~\eqref{eq:IH-MPC}. Hence, we can use the results of Theorem~\ref{thm:scokaert} to
 show that $N^{\infty}$ is finite. Concerning the inequality above, first, note that
$x_t^s-x_i^{\infty} = Ax_0+A^{t-1}B u_0^s+\ldots+Bu_{t-1}^s-Ax_0-A^{t-1}B u_0^{\infty}-\ldots-Bu_{i-1}^{\infty} = A^{t-1}B (u_0^s-u_0^{\infty}) +
\ldots+B(u_{t-1}^s-u_{t-1}^{\infty})$. Second, according to Theorem~\ref{thm:scokaert}, $\mathbf u^{*}=\mathbf u^{\infty}$. Hence, the inequality~\eqref{eq:thm3} follows directly 
from~\eqref{eq:thm_pu_2}. Third, for $s\bar k\to \infty$, we can conclude that the the control sequence obtained using Algorithm~\ref{alg:fama_splitting} converges to the CLQR control
 law obtained by solving Problem~\eqref{eq:clqr}.
\end{proof}
\else 
\begin{proof}
Refer to~\cite{FerrantiReport}
\end{proof}
\fi
\begin{rem}
Note that Theorem~\ref{thm:ferranti} considers the truncated sequences $\mathbf u_N$ and $\bar{\mathbf u}^{\infty}$ for practical reasons, given that, after 
$N^{\infty}$ steps the control commands are obtained using the LQR gain $K$, i.e., are identical for both sequences.
\end{rem}
Initializing the new values of the multipliers is important to satisfy the assumptions of the theorems above. In the estimation phase of the horizon length, when $N^s$ 
increases then setting 
$z_{N^s}^s:=H_2y_{N^s-1}^s$ (i.e., to maintain consensus between the former last subproblem and the new last subproblem) allows one to initialize
 $w_{N^{k}}^{k-1} =v_{N^{s}}^{s-1}= 0$. 
 Concerning $\lambda_{N^s}^{s-1}$, any value such that $\lambda_{N^s}^{s-1}\in
 \mathbb C$\ifpaper~(according to Lemmas~\ref{lem:Pu1}
 and~\ref{lem:Pu2})~\else~(according to Lemmas~3.4 and 3.5 in~\cite{Pu2014})~\fi
 can be used\ifpaper~(e.g., {\small $\lambda_{N^s}^{s-1} =
 \lambda_{N^s-1}^{s-1}$}).\else.\fi

In~\cite{StathopoulosTAC2015} splitting strategies are also used to estimate
$N$.
Compared to~\cite{StathopoulosTAC2015}, we use a different strategy to compute the length of the prediction horizon online. First, we do not propagate the dynamics forward at each 
iterate (until a stable value of $N\geq N^{\infty}$ is reached), but we
  check whether the state of the last subproblem is in $\mathcal X_f$ (an
  inexpensive operation). Second, thanks to the time splitting, our algorithm
  allows one to decrease the length of the horizon online, while the approach
  proposed in~\cite{StathopoulosTAC2015} allows the horizon value only to increase from its initial guess. Specifically, if our initial guess is too conservative, 
  Algorithm~\ref{alg:fama_splitting} starts removing the tail subproblems. Removing subproblems implies removing dual variables that could, in general, affect the future updates of
   the algorithm, such as in~\cite{StathopoulosTAC2015}. This is not the case for the time splitting. The dual variables of the subproblems are independent of each other and removing 
   one of them (on the tail of the horizon) does not compromise the future updates of the others.

Our approach relies on the results of~\cite{Scokaert1998}. In~\cite{Scokaert1998}, however, the steps of the algorithm are more involved from the computational point of view. 
Step 1 of Algorithm~\ref{alg:scokaert} requires the solution of a constrained QP of dimension proportional to the length of the prediction horizon. Recursively solving a constrained
 QP can be extremely time-consuming, especially when the computational resources are limited, such as, in embedded applications. Our design, thanks to the time splitting, only 
 solves unconstrained least-squares problems and simple gradient calculations. Furthermore, an increase in the length of the horizon does not change the dimension of the subproblems, 
 given that their size is independent of the length of the prediction horizon. 

\ifpaper The proposed algorithm can be, in principle, fully parallelized. If $N$
independent workers are available, each of them can be dedicated to a subproblem. The workers communicate with their neighbors only at given time instances to exchange information 
concerning the consensus variables. When the number of subproblems is large and the number of workers is smaller than $N^{}$ asynchronous update strategies can be beneficial to improve
 the performance of the algorithm. Investigation of asynchronous update strategies is part of our future work.
\else 
\fi
\begin{rem}
According to Theorem~\ref{thm:ferranti}, Algorithm~\ref{alg:fama_splitting} returns an optimal solution for Problem~\eqref{eq:clqr} for $s\bar k\to \infty$.
In practical implementations, the algorithm terminates after a fixed number of iterations. In this scenario, constraint tightening techniques can be used to enforce the
 feasibility of the primal estimates (e.g.,~\cite{Rubagotti2014}). 
\end{rem} 
\section{NUMERICAL EXAMPLE}
\label{sec:sec_simulation_results.tex}
We tested our design on the system proposed in~\cite{StathopoulosTAC2015}.
\ifpaper
The system is described by the following matrices:
\begin{equation}
 A = \left[\begin{array}{cc} 1.1 & 2 \\ 0& 0.95\end{array}\right],~ B= \left[\begin{array}{c}
  0\\ 0.0787
 \end{array}\right].
\end{equation}
The state and the control input are constrained in $\mathcal X := \{x\in \mathbb R^n\,|\,\|x\|_{\infty}\leq 10\}$ and $\mathcal U:= \{u\in \mathbb R^m\,|\, \|u\|_{\infty}\leq 1\}$, 
respectively. The matrices $Q$ and $R$ are the identity matrices to satisfy Assumption~\ref{ass:positive_def}. 
\else 
\fi
We computed offline the maximal positively invariant set $\mathcal
X_{f}^{\max}\!\!$ of the closed
loop associated with the LQR controller with weighting matrices $Q$
and $R$ according to~\cite{MPT3}. Then, we selected $X_f$ to be a tightened
subset of $\mathcal X_{f}^{\max}\!$ to take into account the early termination
of the solver. In particular, we tightened the terminal set by a quantity $\epsilon = 10^{-3}$ proportional to the selected termination criterion for the algorithm $\|\boldsymbol {\mu}^k-\boldsymbol {\mu}^{k-1}\|^2\leq \epsilon$. Then, we tuned $\tau  = 0.0726$ and set $\bar k = 1000$ for $s=0$ and $\bar k =1$ for $s>0$.

We tested Algorithm~\ref{alg:fama_splitting} for 1592 different initial
conditions uniformly sampled from $\mathcal X$. For each initial condition, we ran the proposed algorithm and, at the end of each simulation, we tested whether the state $x_{N} = A^{N} x_0 + A^{N-1}B u_0^{(0)} + \ldots+B u_{N-1}^{(N-1)}$ was in $\mathcal X_f$. For practical reasons, we terminated the simulation if convergence to the suboptimal solution was not achieved within $k_{\max}=10^5$ iterations.
  
First, we compared the behavior of the algorithm without and with the backtracking of the horizon length described in Remark~\ref{rem:modified_N}. Figure~\ref{fig:hist_comparison} 
compares the horizon length obtained in the two scenarios using $N^0=20$ as
initial guess on the horizon length. The proposed backtracking strategy reduces the conservatism in the value of $N$. As part of our future work, we plan to investigate an online
 strategy to reduce the horizon length in steps 3-6 of Algorithm~\ref{alg:fama_splitting} (for example, by replacing the \verb!if! condition with a \verb!while!).

Second, we compared the behavior of Algorithm~\ref{alg:fama_splitting} with backtracking for three different initial guesses on the horizon length, i.e., $N^0 \in\{2, 8, 20\}$. 
Figure~\ref{fig:hist} shows the distributions of the estimated values of $N$ for the three aforementioned scenarios. Furthermore, the vertical dashed lines in Figure~\ref{fig:hist} 
represent the mean value of the horizon length computed using the proposed algorithm. Note that we removed from the plots the initial conditions that were inside the terminal set and 
the initial conditions that lead the algorithm to terminate after $k_{\max}$ iterations. Hence, based on this selection on the initial states, we noticed that for $N^0=2$ 
Algorithm~\ref{alg:fama_splitting} terminates within $k_{\max}$ in $238$ cases, for $N^0=8$ it terminates in $818$ cases, and for $N^0 = 20$ it terminates in $1592$ cases.
 Note that by using $N^0 =2$ we are only able to converge within $k_{\max}$
 iterations only when the optimal horizon length is close to 2. This observation
 confirms the benefits of warm-starting the horizon length. 
 Figure~\ref{fig:iter} shows the average number of iterations $s\bar k$ needed 
 to compute $N\geq N^{\infty}$ and an optimal solution for
 Problem~\eqref{eq:clqr}. Note that, warm starting the length of the horizon
does not compromise the number of iteration needed by the algorithm to converge
to $N\leq N^0$.
\ifpaper We believe that the current values can be improved if the back tracking
is implemented along with the computation of the primal and dual variables.
\else
\fi
\begin{figure}[t] 
  \includegraphics[width=.95\linewidth]{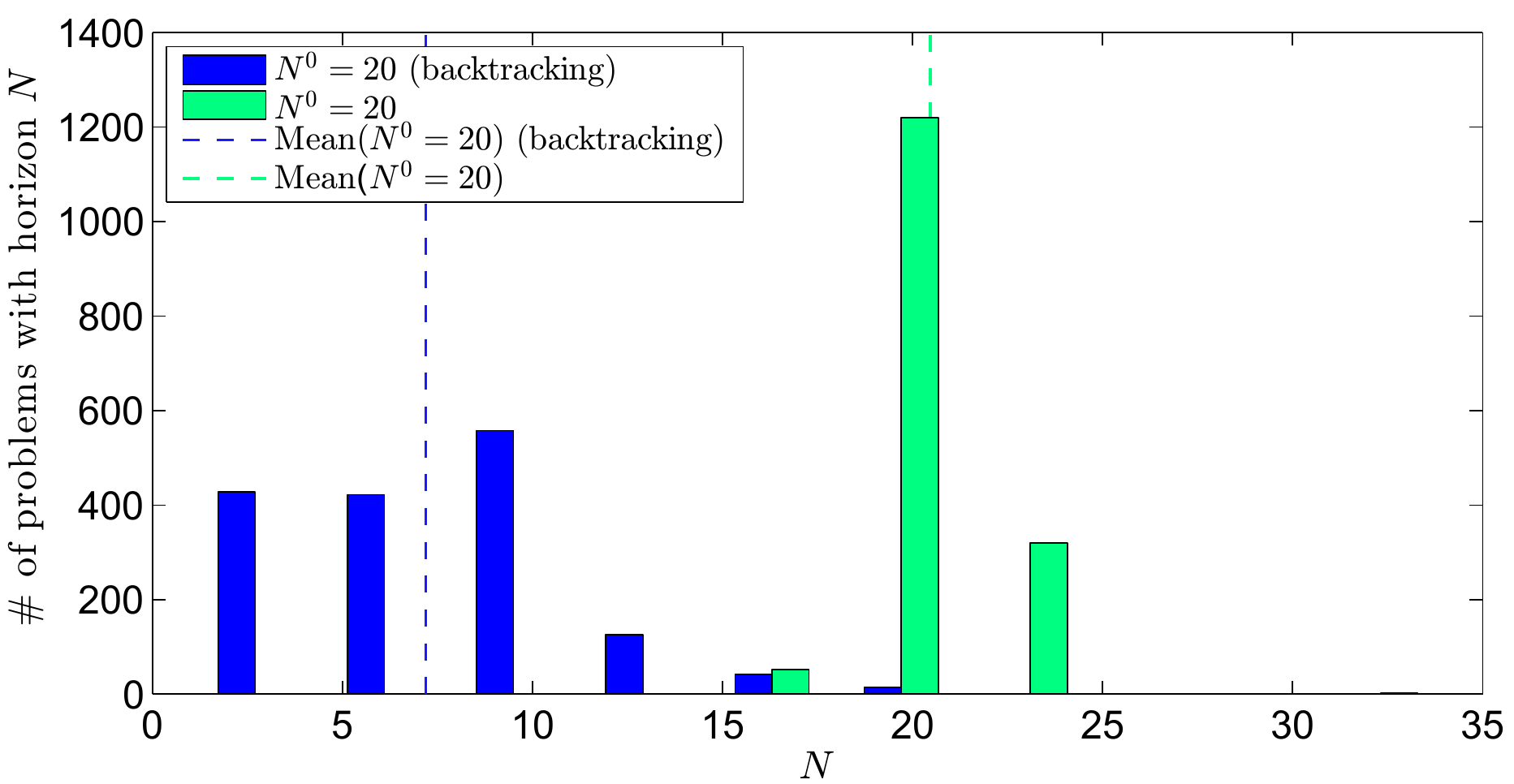}
  \caption{Comparison of the values of $N\geq N^{\infty}$ obtained using Algorithm~\ref{alg:fama_splitting} without and with backtracking, respectively.}\label{fig:hist_comparison}
  \end{figure}
 \begin{figure}[t] 
   \includegraphics[width=.95\linewidth]{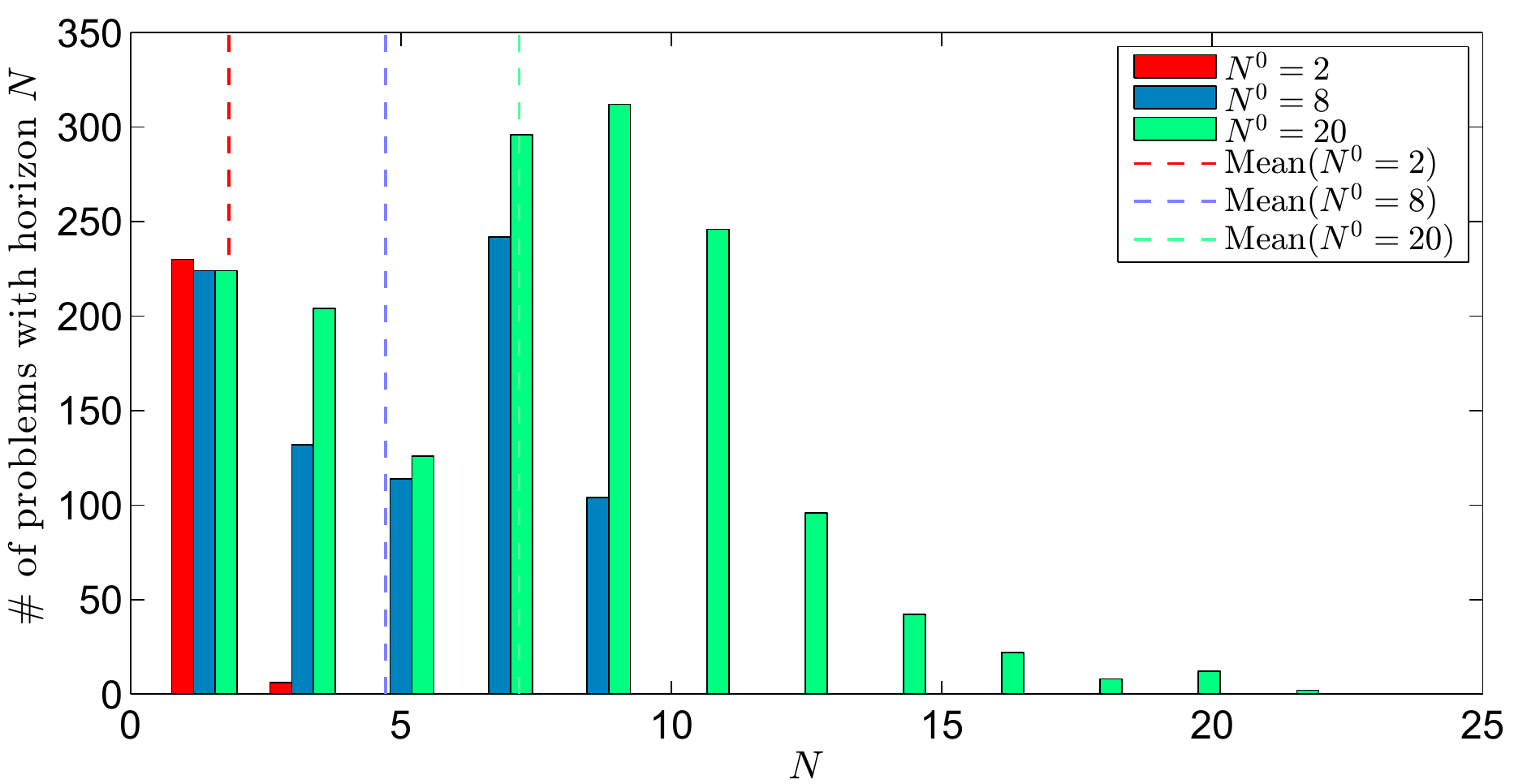}
   \caption{Estimated horizon length $N$ computed using
  Algorithm~\ref{alg:fama_splitting} with backtracking starting from different initial conditions in $\mathcal X$ using different values of $N^0$ to initialize the algorithm.}\label{fig:hist}
\end{figure}
  \begin{figure}[t]
  \includegraphics[width=.95\linewidth]{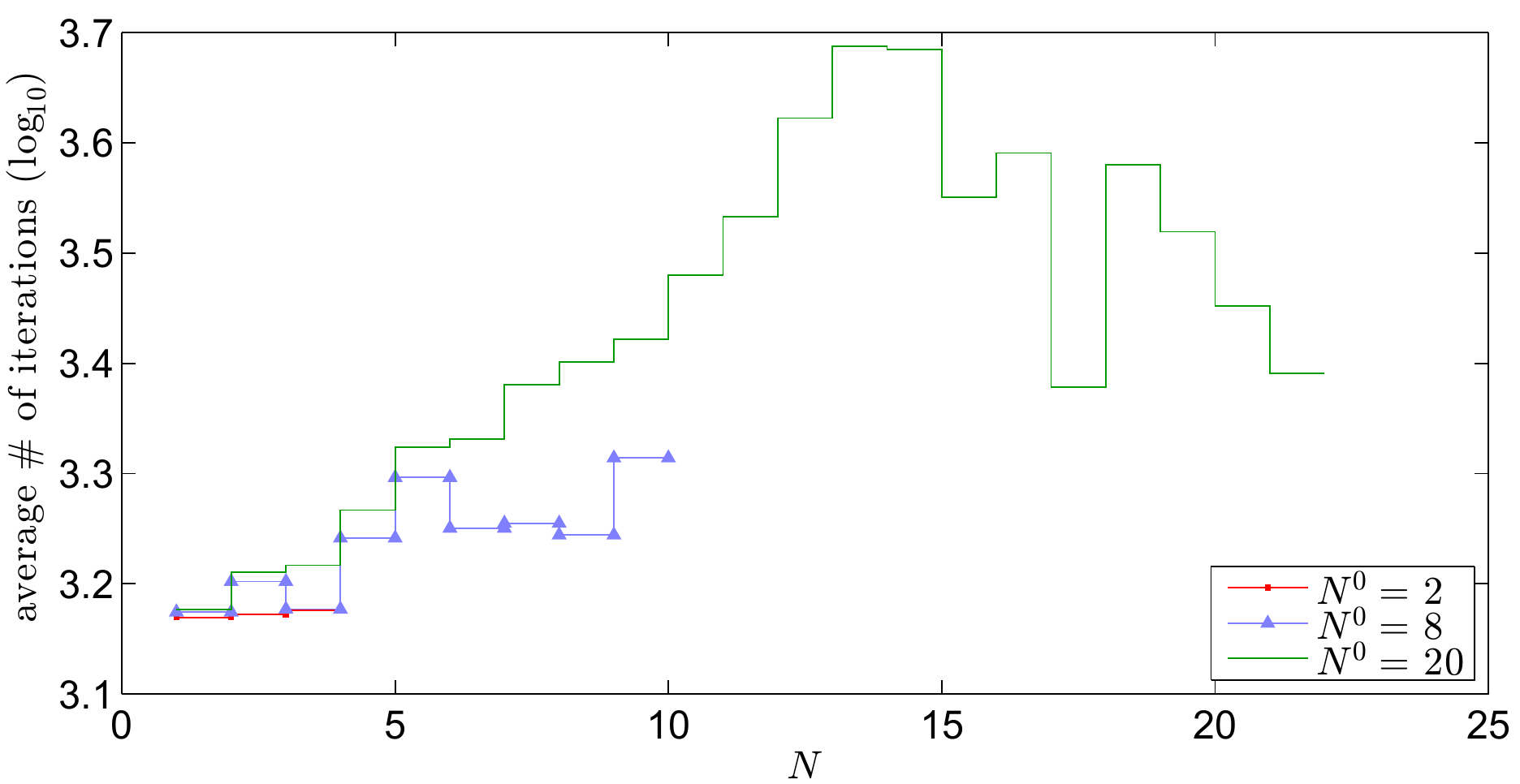}
  \caption{Average number of iterations of
  Algorithm~\ref{alg:fama_splitting} obtained for each initial condition with respect to the length of the horizon $N$ using different values of $N^0$ to initialize the algorithm.} \label{fig:iter}
\end{figure} 
\section{CONCLUSIONS}
\label{sec:sec_conclusions}
This paper proposes an alternative approach to solve the constrained linear quadratic regulator (CLQR) problem using operator splitting techniques.
 The original CLQR problem is reformulated as an MPC problem, whose horizon length is estimated online for each initial condition. We show that the solution
  obtained using our proposed algorithm is an optimal solution of the CLQR problem and that the horizon length is bounded. Finally, we tested our design on a planar system
   to show the advantages of the proposed technique that allows to reduce the number of iterations needed to achieve an optimal solution for the CLQR problem thanks to the warm 
   starting of the horizon length.    

As part of our future work, we plan investigate the possibility of asynchronous updates. Furthermore, we plan to test the proposed algorithm on a practical application.
 

\end{document}